\definecolor{gray}{gray}{0.5}
\theoremstyle{plain}
   \newtheorem{theorem}{Theorem}[section]
   \newtheorem{lemma}[theorem]{Lemma}
   \newtheorem{claim}[theorem]{Claim}
   \newtheorem{definition}[theorem]{Definition}
    \numberwithin{equation}{section}
  \newcommand{\di}{\mathrm{dim}}
  \newcommand{\off}{\text{off}}
  \newcommand{\mc}{\mathcal}
\newcommand{\ch}{\mathrm{ch}}
\newcommand{\FF}{\mathcal{F}}
\newcommand{\HH}{\mathcal{H}}
\renewcommand{\bar}{\overline}
\newcommand{\N}{\mathbb{N}} 
\newcommand{\Q}{\mathbb{Q}} 
\newcommand{\R}{\mathbb{R}} 
\def\scriptO{{{\it O}\kern -.42em {\it `}\kern + .20em}}
\begin{document}
\title[Mid-point convex and convex]{On Convexity, Mid-point Convexity and Hausdorff measures of Sets}
\author[S. Guo \  \  \  \  \ \ T. Lan \ \ \ \ \ \ Y. Xi] {Shaoming Guo, \ \ Tian Lan, \ \ Yakun Xi}
\address{Shaoming Guo: Department of Mathematics, University of Wisconsin-Madison, 480 Lincoln Dr, Madison, WI, 53706, USA}
\email{shaomingguo2018@gmail.com}

\address{Tian Lan: Department of Mathematics, The Chinese University of Hong Kong, Shatin, N.T., Hong Kong}
\email{1155091994@link.cuhk.edu.hk}

\address{Yakun Xi: Department of Mathematics, University of Rochester, 500 Joseph C Wilson Blvd, Rochester, NY 14627, USA}
\email{yxi4@math.rochester.edu}

\begin{abstract}
We give a complete characterization of the size of Borel sets that are mid-point convex but not (essentially) convex, in terms of their Hausdorff dimensions and Hausdorff measures. 
\end{abstract}

\date{\today}

\maketitle

\section{introduction}

Fix a dimension $d\ge 1$. Let $E\subset \R^d$ be a (Lebesgue) measurable set. We say that $E$ is mid-point convex if $\frac{x+y}{2}$ belongs to $E$ whenever $x, y\in E$. We say that $E$ is convex if $tx+(1-t)y$ belongs to $E$ for every $t\in [0, 1]$, whenever $x, y\in E$. It is a standard exercise in measure theory that if $d=1$, $E$ is mid-point convex and $|E|>0$, then $E$ is convex.

Such a result can also be generalized to higher dimensions. We first need to introduce
\begin{definition}
A set $E\subset \R^d$ is called {\bf essentially convex} if there exist an integer $s\in [1, d]$ and a translation of a linear subspace of dimension $s$, which will be called $L_s$, such that $E\subset L_s$, and under the induced topology on $L_s$, it holds that $E^\mathrm{o}\neq \emptyset$ and $\ch(E)\setminus E\subset \partial(\ch(E))$. Here $\ch(E)$ refers to the convex hull of $E$. 
\end{definition}
Now we can state that, if $E\subset \R^d$ is mid-point convex and $|E|>0$, then $E$ is essentially convex. \\

 In the current paper, we study the case where the condition $|E|>0$ fails. The question we are interested in is: If $E$ is a \emph{Borel} set that is mid-point convex but not (essentially) convex, how ``large" can it be? If we use Hausdorff dimension and Hausdorff measure to measure ``size", then we can provide a complete answer to the above question.

\begin{theorem}\label{structure_thm}
\begin{itemize}
    \item[1)] Let $s\in (0, d)$. Let $E$ be a Borel set that is mid-point convex but not essentially convex. Then $\mc{H}^s(E)=0$ or $\infty$. Moreover, there exist Borel sets $E_1$ and $E_2$ of dimension $s$ that are mid-point convex but not essentially convex satisfying $\mc{H}^s(E_1)=0$ and $\mc{H}^s(E_2)=\infty$. 
    \item[2)] Let $s=d$. There exists a Borel set $E$ of dimension $s$ that is mid-point convex but not essentially convex satisfying $\mc{H}^s(E)=0$.
\end{itemize}
\end{theorem}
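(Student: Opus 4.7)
The plan is to establish the dichotomy of Part 1) by contradiction and to realize the examples $E_1$, $E_2$ of Part 1) and the example of Part 2) as explicit Borel $\mathbb{Q}$-vector subspaces of $\mathbb{R}^d$.

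For the dichotomy, suppose for contradiction that $E\subset\mathbb{R}^d$ is Borel, mid-point convex, not essentially convex, and $0<\mathcal{H}^s(E)<\infty$ for some $s\in(0,d)$. The key observation is the strong self-similarity of mid-point convex sets: for every $x_0\in E$ the similitude $\phi_{x_0}(x):=(x+x_0)/2$, of contraction ratio $1/2$, sends $E$ into $E$, and iterating yields $\phi_{x_0}^n(E)\subset E\cap B(x_0,\,2^{-n}\,\mathrm{diam}(E))$ for every $n\geq 1$. After replacing $E$ by $E\cap B(x_0,R)$ for some $x_0\in E$ and sufficiently large $R$ (still mid-point convex, since the midpoint of two points in $B(x_0,R)$ again lies in $B(x_0,R)$ by the triangle inequality), we may assume $E$ is bounded with positive finite $\mathcal{H}^s$-measure. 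The scaling of Hausdorff measure then gives the uniform lower density estimate $\mathcal{H}^s(E\cap B(x_0,r))\geq c\,r^s$ at every point $x_0\in E$, which, combined with the standard upper density bound $\Theta^{*s}(E,x_0)\leq 1$ that holds $\mathcal{H}^s$-a.e., forces $E$ to be essentially $s$-Ahlfors regular.

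To convert Ahlfors regularity into positive Lebesgue measure, I would exploit the identity $E+E=2E$, which is equivalent to mid-point convexity (the inclusion $(E+E)/2\subset E$ is the midpoint condition, and $E\subset(E+E)/2$ is trivial). Induction gives that the $2^n$-fold Minkowski sum $E+\cdots+E$ equals $2^nE$, so every dyadic average of $2^n$ points of $E$ lies in $E$. Taking a Frostman measure $\mu$ of exponent $s$ supported on a compact subset of $E$, the rescaled convolution $(\mu^{*2^n})_{2^{-n}}$ is then a probability measure supported in $E$. Using Fourier/energy estimates bootstrapped from the Ahlfors regularity, one shows that for $n$ sufficiently large this rescaled measure is absolutely continuous with respect to Lebesgue measure, so its support (contained in $E$) has positive $d$-dimensional Lebesgue measure. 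The classical result stated just before Theorem~\ref{structure_thm} then forces $E$ to be essentially convex, contradicting the hypothesis. The hard part is precisely this absolute-continuity step: Frostman measures on generic $s$-dimensional sets need not produce absolutely continuous convolutions after any finite number of iterations, so the mid-point convex structure must be used crucially to constrain the Fourier behavior of $\hat\mu$ beyond what Frostman-type estimates alone yield.

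For the constructions, I would realize each required set as a Borel $\mathbb{Q}$-vector subspace of $\mathbb{R}^d$. Every such subspace is automatically closed under midpoints (in fact under all rational affine combinations) and fails to be essentially convex unless it equals a full affine subspace. Given $s\in(0,d)$, starting from a compact $K\subset\mathbb{R}^d$ with $\dim_H K=s$ and the desired value of $\mathcal{H}^s(K)\in\{0,\infty\}$ (standard Cantor-type constructions yielding either), I would take the $\mathbb{Q}$-linear span $\mathrm{span}_{\mathbb{Q}}(K)=\bigcup_{n\geq 1}\{q_1k_1+\cdots+q_nk_n:q_i\in\mathbb{Q},\,k_i\in K\}$, which is $F_\sigma$ (hence Borel), a $\mathbb{Q}$-vector subspace, and of Hausdorff dimension exactly $s$ when $K$ is arranged so the span remains a proper subspace of $\mathbb{R}^d$ (guaranteed by keeping $\mathrm{span}_{\mathbb{Q}}(K)$ of Lebesgue measure zero). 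This yields $E_1$ and $E_2$. For Part 2), I would first produce a proper Borel $\mathbb{Q}$-subspace $G\subset\mathbb{R}$ with $\dim_H G=1$ and $|G|=0$ by the same span construction applied to a compact dimension-$1$ set of measure zero; then $E:=G^d$ is a Borel $\mathbb{Q}$-vector subspace of $\mathbb{R}^d$ with $\dim_H E=d$ and Lebesgue measure $0$, mid-point convex and, lacking any interior, not essentially convex.
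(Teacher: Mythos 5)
Your proposal has two genuine gaps, one in each half of the argument.

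\textbf{Dichotomy.} Your derivation of the uniform lower density bound for a bounded mid-point convex $E$ with $0<\mathcal{H}^s(E)<\infty$ is correct and is close in spirit to the key estimate in the paper. But the subsequent conversion to positive $d$-dimensional Lebesgue measure via iterated convolutions is both unjustified and, as stated, cannot work: a mid-point convex $E$ with finite $\mathcal{H}^s$-measure and $s<d$ could be contained in a proper affine subspace, in which case every self-convolution $\mu^{*k}$ of any measure on $E$ remains singular with respect to $d$-dimensional Lebesgue measure, so no amount of Fourier decay helps. You acknowledge the absolute-continuity step is the ``hard part'' but do not supply it, and even a correct version would need a preliminary case analysis identifying the ambient subspace. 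The paper avoids all of this: its Lemma~\ref{xy} shows that positive upper $s$-density propagates along every segment through a point of positive density, which immediately yields $\mathcal{H}^s(\ch(E))=\mathcal{H}^s(E)<\infty$; hence $\ch(E)$ is a convex set of Hausdorff dimension $s$, forcing $s$ to be an integer, and then the classical $s$-dimensional Lebesgue density theorem inside the affine hull finishes the job. That route uses no Fourier analysis and no Frostman bootstrapping.

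\textbf{Constructions.} The $\mathbb{Q}$-linear span of a generic compact $K$ of dimension $s$ does \emph{not} have Hausdorff dimension $s$. Sumsets can have strictly larger dimension---for the middle-third Cantor set one already has $K+K=[0,2]$---and $\mathrm{span}_{\mathbb{Q}}(K)$ contains all rational dilates of all $j$-fold sumsets $K+\cdots+K$, so its dimension is $\sup_j \dim_{\mathcal{H}}(K_j)$, typically $d$. Your proviso ``keeping $\mathrm{span}_{\mathbb{Q}}(K)$ of Lebesgue measure zero'' does not control Hausdorff dimension, which is a much finer invariant. Controlling the dimensions of all iterated sumsets simultaneously is precisely the content of the paper's Theorem~\ref{mainthm} and the elaborate digit-block construction in Section~\ref{sectionthree}, which the paper identifies as the main difficulty. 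Your construction skips exactly this step, so it does not produce sets $E_1,E_2$ with the required dimension, let alone the prescribed $\mathcal{H}^s$ values $0$ and $\infty$; the same issue undermines the $G^d$ construction in Part~2), since without sumset control one cannot even guarantee that $G$ has Lebesgue measure zero.
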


In Section \ref{sectiontwo} we will prove that every set $E$ of Hausdorff dimension $s$ with $0<s<d$ that is mid-point convex but not essentially convex must have an $s$-dimensional Hausdorff measure $0$ or $\infty$. In Section \ref{sectionthree} we will construct one-dimensional Borel sets $E_1$ and $E_2$ with $\mc{H}^1(E_1)=0$ and $\mc{H}^1(E_2)=\infty$ that are mid-point convex but not essentially convex. This section consists of the main difficulty of the paper. Our constructions are variants of those in \cite{SS10}. In Section \ref{sectionfour}, we will construct desired sets in Theorem \ref{structure_thm} by using the above $E_1$ and $E_2$ on $\R$. \\

%

{\bf Notation:} Let $B\subset \R^d$. We will use $\di_{\mathcal H}(B)$ to denote the Hausdorff dimension of the set $B$. Moreover, $\mathcal H^{\alpha}(B)$ denotes the $\alpha$-dimensional Hausdorff measure of the set $B$. When $\alpha=d$, we write $|B|:=\mc{H}^{\alpha}(B)$. For a real number $x\in \R$, we use $[x]$ to denote the floor function of $x$. Moreover, $\{x\}$ is used to denote the decimal part of $x$, that is, $\{x\}=x-[x]$.  Throughout the paper, we will use $\mc{D}$ to denote the collection of all dyadic numbers. \\

{\bf Acknowledgements:} The authors would like to thank Po-Lam Yung for a number of insightful discussions. The authors would like to thank Nikolaos Chatzikonstantinou for bringing the paper \cite{SS10} to our attention. The first author was supported in part by a direct grant for research from
the Chinese University of Hong Kong (4053295), and by NSF grant 1800274. The second author was partially supported by grant CUHK24300915 from the Hong Kong Research Grants Council. The third author was supported in part by the AMS-Simons travel grant.


%
%

\section{Rigidity in mid-point convexity}\label{sectiontwo}

Let $s\in (0, d)$. Let $E$ be a Borel set with $0< \mathcal{H}^s(E)<\infty$ that is mid-point convex. We will first show that $s$ must be an integer and then show that $E$ must be essentially convex. This will finish the proof of one part of Theorem \ref{structure_thm} that $\mc{H}^s(E)=0$ or $\infty$, under the assumption that $E$ is mid-point convex but not essentially convex. 
 
 Let us first recall some basic definitions.  Given $x\in \R^d$, define the upper density of $E$ at $x$ by 
\begin{equation}
\overline D^s(E, x)=\overline{\lim\limits_{r\rightarrow 0}}\frac{\mathcal H^s(E\cap B_r(x))}{(2r)^s}.
\end{equation}
Here $B_r(x)$ denotes the closed ball of radius $r$ centered at $x$. Then we have (Proposition 5.1 in \cite{Fal14})
\begin{equation}\label{181101e2.2}
\overline D^s(E, x)=0 \text{ for $ \mathcal{H}^s$-almost all } x\not\in E,
\end{equation}
and 
\begin{equation}
2^{-s}\le \overline D^s(E, x)\le 1 \text{ for $ \mathcal{H}^s$-almost all } x\in E.
\end{equation}
 Therefore there exists a point $x_0\in E$ such that the upper density of $E$ at $x_0$ satisfies $2^{-s}\le \overline D^s(E,x_0)\le 1$. 
 \begin{lemma}\label{xy}
 For every $x,\ y\in E$, $ t\in[0,1]$, if we assume that $\overline D^s(E, y)>0,$ then we have $$\overline D^s(E, tx+(1-t)y)>0,$$
 for all $t\in[0,1].$
\end{lemma}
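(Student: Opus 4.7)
The plan is to first handle the case of dyadic $t$ by turning mid-point convexity into an affine self-map of $E$, and then extend to arbitrary $t \in [0, 1)$ by the density of dyadic rationals together with a careful matching of radii. Write $z_t := tx + (1-t)y$; assume $x \ne y$, the other case being trivial.

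Suppose first $t = k/2^n \in \mc{D} \cap [0, 1]$. Iterated mid-point convexity (a short induction on $n$: for odd $k$, $\phi_{(k\pm1)/2^n}(w) \in E$ by the previous case and their midpoint is $\phi_t(w)$) will show that the affine map $\phi_t(w) := tx + (1-t)w$ sends $E$ into $E$. When $t < 1$, $\phi_t$ is a bijective affine map with constant stretch factor $1-t$, so $\mc{H}^s(\phi_t(A)) = (1-t)^s\, \mc{H}^s(A)$ for every Borel $A$. Taking $A = E\cap B_r(y)$ and noting $\phi_t(E \cap B_r(y)) \subset E \cap B_{(1-t)r}(z_t)$ will yield
\[
\mc{H}^s\bigl(E \cap B_{(1-t)r}(z_t)\bigr) \ge (1-t)^s \, \mc{H}^s\bigl(E \cap B_r(y)\bigr),
\]
so that dividing by $(2(1-t)r)^s$ and taking $\limsup$ as $r \to 0$ gives $\overline D^s(E, z_t) \ge \overline D^s(E, y) > 0$.

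For general $t \in [0, 1)$ I will set $c := \tfrac12 \overline D^s(E, y) > 0$ and select $\rho_n \downarrow 0$ with $\mc{H}^s(E\cap B_{\rho_n}(y)) \ge c (2\rho_n)^s$. Then I will pick a dyadic $t_n' \in [0, 1)$ with $|t_n' - t|\,|x-y| \le (1-t)\rho_n / 2$, which is possible since $1-t > 0$ and dyadics are dense. For $n$ large this forces $1 - t_n' \ge (1-t)/2$ and $|z_{t_n'} - z_t| \le (1-t_n')\rho_n$, hence the inclusion $B_{(1-t_n')\rho_n}(z_{t_n'}) \subset B_{r_n}(z_t)$ with $r_n := 2(1-t_n')\rho_n \to 0$. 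Combining this inclusion with the dyadic scaling bound applied to $t_n'$ and radius $\rho_n$ will produce $\mc{H}^s(E \cap B_{r_n}(z_t)) \ge c\, r_n^s$, and therefore $\overline D^s(E, z_t) \ge c/2^s > 0$.

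The main technical obstacle is the balancing in this last step: the perturbation $|z_{t_n'} - z_t|$ of the center of the ball must be absorbed into the shrinking radius $(1-t_n')\rho_n$ produced by the dyadic step without collapsing the density ratio, which is why the choice of dyadic $t_n'$ has to be coupled to $\rho_n$. The degenerate endpoint $t = 1$, where $z_t = x$, is not reached by this scaling argument since $1-t = 0$ makes $\phi_t$ constant; however, there the claim reduces to $\overline D^s(E, x) > 0$, which is automatic whenever $x$ belongs to the $\mc{H}^s$-full set of density points of $E$ identified immediately before the lemma.
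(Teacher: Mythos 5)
Your dyadic step and the approximation for $t\in[0,1)$ are correct, and the overall strategy (dyadic $t$ via iterated mid-point convexity, then density of dyadics) is the same one the paper uses, though you package the dyadic case more cleanly via the affine contraction $\phi_t$ and the scaling $\mathcal H^s(\phi_t(A))=(1-t)^s\mathcal H^s(A)$, whereas the paper normalizes $x=0$, $y=e_1$ and tracks explicit balls $B_{2^{-n}r_0}(k2^{-n}e_1)$.

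The gap is at $t=1$. The lemma asserts $\overline D^s(E,x)>0$ for \emph{every} $x\in E$ once one point $y$ of positive upper density exists; this is precisely what the enclosing argument needs (it starts from a single density point $x_0$ and must propagate positivity to all of $E$ and then to the convex hull). Appealing to the fact that $\mathcal H^s$-almost every point of $E$ has positive upper density does not help: $x$ is an arbitrary element of $E$, not one you get to choose from a full-measure subset, so the $t=1$ case is a genuine conclusion of the lemma rather than an input. Your own coupling also confirms the breakdown: with $t=1$ the requirement $|t_n'-t|\,|x-y|\lesssim(1-t_n')\rho_n$ becomes $|x-y|\lesssim\rho_n$, which fails once $\rho_n\to0$.

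The fix stays entirely within your framework but reverses which parameter is frozen. Fix one radius $\rho>0$ with $\mathcal H^s(E\cap B_\rho(y))\ge c(2\rho)^s$, $c:=\tfrac12\overline D^s(E,y)$. For dyadic $t'\uparrow 1$ your scaling bound gives $\mathcal H^s\bigl(E\cap B_{(1-t')\rho}(z_{t'})\bigr)\ge c\bigl(2(1-t')\rho\bigr)^s$, and since $|z_{t'}-x|=(1-t')|x-y|$ one has $B_{(1-t')\rho}(z_{t'})\subset B_{R_{t'}}(x)$ with $R_{t'}:=(1-t')(\rho+|x-y|)\to0$. Hence
\[
\frac{\mathcal H^s\bigl(E\cap B_{R_{t'}}(x)\bigr)}{(2R_{t'})^s}\ \ge\ c\left(\frac{\rho}{\rho+|x-y|}\right)^{\!s}\ >\ 0,
\]
so $\overline D^s(E,x)>0$. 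With this addition the proof is complete; without it, the lemma (and the application that follows it) is not established.
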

Let us first see how we can use Lemma \ref{xy} to obtain the desired estimate on $\mc{H}^s(E)$. First of all, for every $x\in \text{ch}(E)$, we have $\overline D^s(E, x)>0$. Therefore by \eqref{181101e2.2} we see that 
\begin{equation}\label{chE}
     0<\mathcal H^s(\text{ch}(E))=\mathcal H^s(E)<\infty.
\end{equation}
We conclude that $\ch(E)$ must have Hausdorff dimension $s$, thus $s$ is an integer. Next, we will show that $E$ is essentially convex. Up to a rotation of $E$, we can without loss of generality assume that $\ch(E)\subset \R^s$, where $\R^s$ is a coordinate space in $\R^d$. Now it is a standard exercise to apply the Lebesgue density theorem in $\R^s$ and conclude that $E$ is essentially convex. \\
%

It remains to prove Lemma \ref{xy}.
\begin{proof}[Proof of Lemma \ref{xy}]  Without loss of generality, we assume that $x$ is the origin and $y=(1, 0, \dots, 0)$. For simplicity, we denote $D=\overline D^s(E, y)>0.$ By definition, we can find a small positive number $r_0$ such that 
\[\frac{\mathcal H^s(E\cap B_{r_0}(y))}{(2r_0)^s}\ge \frac12D.\]
First we prove

%
\begin{claim}\label{claim11}
 For every dyadic number $k2^{-n}$ with $n\ge 0$ and $ 0<k\le2^n$ being odd, we have
\[\frac{\mathcal H^s(E\cap B_{2^{-n}r_0}(y_{k, n}))}{(2^{1-n}r_0)^s}\ge \frac12 D,\]
where $y_{k, n}=(k2^{-n}, 0, \dots, 0)$.
\end{claim}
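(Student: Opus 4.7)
The plan is to induct on $n$. The base case $n=0$ admits only $k=1$, giving $y_{1,0}=y$, for which the claim is exactly the defining inequality for $r_0$. Before starting the induction I would record the preliminary observation that every dyadic point $y_{k,n}$ with $0\le k\le 2^n$ lies in $E$; this follows by induction on $n$ from $y_{2k,n}=y_{k,n-1}$ together with $y_{2k+1,n}=\tfrac12(y_{k,n-1}+y_{k+1,n-1})\in E$ by mid-point convexity, starting from $0,y\in E$.

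For the inductive step, fix an odd $k$ with $1\le k\le 2^n$ and write $k=2j+1$, so $0\le j\le 2^{n-1}-1$. Exactly one of $j$ and $j+1$ is odd: call it $\ell$, and call the other one $m$. Then $\ell$ is an odd integer in $[1,2^{n-1}]$, so the inductive hypothesis applies at level $n-1$ and yields
\begin{equation*}
\mathcal{H}^s\bigl(E\cap B_{2^{-(n-1)}r_0}(y_{\ell,n-1})\bigr)\;\ge\;\frac{D}{2}\bigl(2^{2-n}r_0\bigr)^s.
\end{equation*}

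The geometric heart of the argument is the similarity $T(z):=\tfrac12(z+y_{m,n-1})$, which has contraction ratio $1/2$. Because $y_{m,n-1}\in E$ by the preliminary observation, mid-point convexity of $E$ gives $T(E)\subset E$. Moreover $T(y_{\ell,n-1})=\tfrac12(y_{j,n-1}+y_{j+1,n-1})=y_{k,n}$, so $T$ carries $B_{2^{-(n-1)}r_0}(y_{\ell,n-1})$ bijectively onto $B_{2^{-n}r_0}(y_{k,n})$. Since $\mathcal{H}^s$ scales as $r^s$ under a similarity of ratio $r$, we conclude
\begin{equation*}
\mathcal{H}^s\bigl(E\cap B_{2^{-n}r_0}(y_{k,n})\bigr)\;\ge\;2^{-s}\cdot\frac{D}{2}\bigl(2^{2-n}r_0\bigr)^s\;=\;\frac{D}{2}\bigl(2^{1-n}r_0\bigr)^s,
\end{equation*}
which is exactly the claim. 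I do not anticipate any serious obstacle; the only subtlety is to notice that the endpoint cases $k=1$ (where $m=0$, so one pivots on the origin) and $k=2^n-1$ (where $m=2^{n-1}$, so one pivots on $y$) fit seamlessly into the same uniform scheme, so no separate treatment of these is needed.
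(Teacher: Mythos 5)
Your proof is correct and fleshes out exactly the mechanism the paper gestures at for $n>1$: the paper's base step for $n=1$ uses the midpoint map pivoting on the origin, and your induction systematically pivots on the appropriate dyadic point $y_{m,n-1}\in E$ (established by your preliminary observation), using the scaling of $\mathcal H^s$ under the ratio-$\tfrac12$ similarity. The paper's ``by the homogeneity of $\mathcal H^s$, the same proof works for every $n>0$ by induction'' is precisely this argument, so your route is the same one, just made explicit.
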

The claim follows by an induction on $n\ge0$. The base case $n=0$ is trivial. If $n=1$, it suffices to show that
\[{\mathcal H^s(E\cap B_{\frac12 r_0}(y_{1, 1}))}\ge \frac12Dr_0^s.\]
Since $E$ is midpoint convex, $x, y\in E$, we have $$E\supset E/2\supset \frac12 (E\cap B_{ r_0}(y))= (E/2)\cap B_{ \frac12r_0}(y_{1, 1}).$$
Therefore 
\[{\mathcal H^s(E\cap B_{\frac12 r_0}(y_{1, 1}))}\ge{\mathcal H^s((E/2)\cap B_{ \frac12r_0}(y_{1, 1}))}=2^{-s}{\mathcal H^s(E\cap B_{ r_0}(y))}\ge \frac12 D{r_0^s}.\]
By the homogeneity of $\mathcal H^s$, the same proof works for every $n>0$ by induction. This finishes the proof of the claim. 

It remains to show that
$\overline D^s(E, ty)>0$ for every $t\in[0,1)$. We fix $t\in [0, 1)$. For every sufficiently small $r>0$, denote $n:=-[\log_2{r}]+2$. This choice of $n$ guarantees that  $\frac18r\le 2^{-n}\le\frac14 r$, and that there exists an odd integer $k$ such that $B_{2^{-n}r_0}(y_{k, n})\subset B_{2^{-n}}(y_{k, n})\subset B_r(ty).$
Therefore, by Claim \ref{claim11}, we obtain 
	\[\mathcal H^s(E\cap B_r(ty))\ge\mathcal H^s(E\cap B_{2^{-n}r_0}(y_{k, n}))\ge 2^{-1} D(2^{1-n}r_0)^s\ge2^{-1-3s}Dr_0^s(2r)^s,\]
which further implies that
\[\frac{\mathcal H^s(E\cap B_r(ty))}{(2r)^s}\ge 2^{-1-3s}Dr_0^s>0.	\]
This finishes the proof of Lemma \ref{xy}. \end{proof}

\section{Constructing one dimensional sets}\label{sectionthree}
Given a compact set $A\subset [0,1)$ and a positive integer $j$, we denote 
\begin{equation}
    A_j:= \{a_1+a_2+\dots+a_j \,|\,a_1,\,a_2,\,\dots a_j\in A\}.
\end{equation}
We now prove the following theorem.
\begin{theorem}\label{mainthm}
Let $\{\alpha_j\}_{j=1}^{\infty}$ be a non-decreasing sequence in $(0,1)$. Then there exists a compact set $A\subset [0,1)$ such that $\di_{\mathcal H} (A_j)=\alpha_j$ for every $j\ge 1$. Moreover, we can construct such an $A$ satisfying $\mathcal H^{\alpha_1}(A)=\infty$.
\end{theorem}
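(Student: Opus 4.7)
The plan is to construct $A$ as a multi-scale Cantor-type subset of $[0,1)$ whose digit structure at varying scales is carefully tuned so that $A$ has Hausdorff dimension $\alpha_1$, each iterated sumset $A_j$ has Hausdorff dimension $\alpha_j$, and $\mathcal{H}^{\alpha_1}(A)=\infty$. The construction is a variant of the one in \cite{SS10}.

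Fix a base $b$ and a slowly growing threshold $J_k\to\infty$. Partition $\N$ into consecutive blocks $B_1,B_2,\ldots$ and, at every scale $n \in B_k$, allow only digits from a chosen set $D^{(k)} \subset \{0,1,\ldots,\lfloor b/J_k\rfloor\}$; the smallness constraint suppresses carries, so that for every $j \le J_k$ the sumset $A_j$ is a Cantor set with digit set $jD^{(k)} := D^{(k)}+\cdots+D^{(k)}$ ($j$ summands) at scales in $B_k$, where
\[
A \;:=\; \Bigl\{\sum_{n=1}^\infty d_n b^{-n}\;:\; d_n \in D^{(k)} \text{ whenever } n \in B_k\Bigr\}.
\]
Combining the natural cover estimate with the Mass Distribution Principle applied to the uniform Cantor measure on $A_j$ gives
\[
\di_{\mathcal{H}}(A_j) \;=\; \liminf_{K \to \infty} \frac{\sum_{k=1}^K |B_k|\,\log|jD^{(k)}|}{\sum_{k=1}^K |B_k|\,\log b}.
\]
Thus the proof reduces to choosing the digit sets $D^{(k)}$ and block lengths $|B_k|$ so that these liminfs equal $\alpha_j$ for every $j$ simultaneously.

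The heart of the construction is the arithmetic design of $D^{(k)}$. I would interleave blocks of distinct types, where each type uses a digit set built as a union of arithmetic progressions with widely separated step sizes inside $\{0,\ldots,\lfloor b/J_k\rfloor\}$; the dissociation of step sizes ensures that the $j$-fold sumset $jD^{(k)}$ has cardinality equal, up to sub-exponential factors, to a specific product of AP-lengths that can be prescribed at will. By scheduling block lengths so that, for each fixed $j$, the cumulative ratio $\sum |B_k|\log|jD^{(k)}|/\sum|B_k|\log b$ dips to $\alpha_j$ infinitely often, all liminfs hit their targets. To arrange $\mathcal{H}^{\alpha_1}(A)=\infty$, I would additionally thicken each $|D^{(k)}|$ by a sub-exponential factor $c_k$ with $\log c_k/\log b \to 0$ and $\sum_k |B_k|\log c_k = \infty$; the natural Cantor measure $\mu$ then satisfies $\mu(\mathrm{cyl}_K)/r_K^{\alpha_1} \to 0$, which via Mass Distribution forces $\mathcal{H}^{\alpha_1}(A)=\infty$.

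\textbf{The main obstacle} is the combinatorial design: constructing digit sets with prescribed $j$-sumset growth profiles (via dissociated AP unions, proving matching upper and lower bounds on $|jD^{(k)}|$) and then scheduling the block lengths so that, simultaneously for every $j$, the $\liminf$ of the weighted $j$-sumset ratio lands exactly on $\alpha_j$. A secondary obstacle is verifying the upper bound on $\di_{\mathcal{H}}(A_j)$, which must equal the liminf rather than the limsup of the block ratios; this requires choosing the cover at a subsequence of scales that realizes the liminf, exploiting the multi-scale structure of the blocks. Once these ingredients are in hand, the dimension identities and the divergence of $\mathcal{H}^{\alpha_1}(A)$ follow from standard Cantor-set/Mass Distribution arguments.
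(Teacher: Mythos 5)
Your high-level skeleton matches the paper's: a multi-scale Cantor-type set $A\subset[0,1)$ with varying digit structure, a Billingsley-type formula expressing $\di_{\mathcal H}(A_j)$ as a liminf of cumulative digit-count ratios (the paper's Lemmas 3.3--3.4), and a refined mass distribution principle to certify $\mathcal{H}^{\alpha_1}(A)=\infty$ (the paper's Lemma 3.5). The block scheduling / diagonalization is also present in the paper (the sequence $\zeta_i$). The gap is precisely in the step you flag as ``the heart of the construction,'' and it is not merely unwritten detail --- the construction you sketch cannot work.

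The core object in your proposal is a Cantor set of the form $A=\{\sum d_n b^{-n}: d_n\in D^{(\tau(n))}\}$, i.e. a Cartesian product in digit space with one digit alphabet per scale. For this structure the Billingsley ratio $\phi_n(j):=\log N_n(A_j)/(n\log b)$ is a weighted average of the per-scale quantities $\log|jD^{(k)}|/\log b$. For a union of dissociated arithmetic progressions $D=P_1\cup\dots\cup P_m$ with lengths $n_1\ge\dots\ge n_m$, one has $|jD|\approx\prod_{l\le j} n_l$ for $j\le m$ (the dominant term in $jD$ comes from summing $j$ \emph{distinct} APs), so $j\mapsto\log|jD|/\log b$ has nonincreasing increments, i.e. is concave. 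A weighted average of concave sequences is concave, so every $\phi_n(\cdot)$ is concave in $j$. Now take $\alpha_1=0.1$, $\alpha_2=0.2$, $\alpha_3=0.5$ (a perfectly valid nondecreasing sequence in $(0,1)$). Your formula $\di_{\mathcal H}(A_j)=\liminf_n\phi_n(j)$ requires infinitely many $n$ with $\phi_n(2)\le 0.2+\epsilon$ while $\phi_n(1)\ge 0.1-\epsilon$ and $\phi_n(3)\ge 0.5-\epsilon$ for all large $n$. But concavity forces $\phi_n(3)\le 2\phi_n(2)-\phi_n(1)\le 0.3+3\epsilon$, a contradiction for small $\epsilon$. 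So the proposal breaks down on any convex stretch of $\{\alpha_j\}$, and no rescheduling of block lengths can fix it.

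The paper avoids this by building $A^i$ as a \emph{set-level} union $A^i:=\bigcup_{l=1}^i B^i_l$, where each $B^i_l$ is a Cantor set defined by forcing the binary digits to vanish on a tailored union of intervals $\mathcal{B}^i_l$ of positions. Then $A^i_j=\bigcup_{l_1,\dots,l_j}(B^i_{l_1}+\dots+B^i_{l_j})$, and the crucial combinatorial input is a pigeonhole: the $j$ index sets $\{1,\dots,j+1\}\setminus\{l_{j'}\}$, $j'=1,\dots,j$, always share a common element, so at every ``type-$j$'' scale interval all $j$ summands are simultaneously forced to have zero digits. This makes the lower density of the zero-digit positions of $A^i_j$ exactly $1-\alpha_j$, and the dimension (via the paper's Lemma 3.4) exactly $\alpha_j$, for an arbitrary nondecreasing target. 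A single alphabet per position --- even after interleaving --- cannot replicate this, because it is confined to concave sumset-growth profiles; the union of $i$ such Cantor sets with interval-based (rather than position-wise) restrictions is strictly more expressive, and that is what you would need to add to close the gap.
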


The same construction also works if we weaken our assumption to $\{\alpha_j\}_{j=1}^{\infty}\subset [0, 1]$. Here to avoid certain technical issues, we choose to avoid the values $0$ and $1$. 

Before we present the proof of Theorem \ref{mainthm}, let us first see how we can use it to prove Theorem \ref{structure_thm} in the one dimensional case.
\begin{proof}[Proof of Theorem \ref{structure_thm}: Case $d=1$.] Firstly, given $s\in (0, 1]$, we construct a set $E$ with dimension $s$ and with $\mathcal H^s(E)=0$ that is mid-point convex but not convex. Take an increasing sequence with limit $s$, say $\{\frac{js}{j+1}\}_{j=1}^{\infty}$. By Theorem \ref{mainthm}, we know that there exists a set $A$ such that each $A_j$ has dimension $\frac{js}{j+1}$. Define 
	\begin{equation}\label{defi_set}
	E:=\bigcup_{j\ge 1}\frac{A_j}{j}.
	\end{equation}
	It is not difficult to see that $E$ is mid-point convex. Moreover, $\mathcal H^{s}(E)=0$ and $\mathcal H^{s-\epsilon}(E)=\infty$ for every $\epsilon>0$. This finishes the construction of the desired set. \\
	
	Next for $s\in(0,1)$, we construct a set $E$ with dimension $s$ and $\mathcal H^{s}(E)=\infty$. We will take the sequence in the above theorem to be the constant sequence $\{s\}_{j=1}^{\infty}$. Theorem \ref{mainthm} says that there exists $A$ such that $\di_{\mc{H}}(A_j)=s$ and $\mathcal H^s(A)=\infty$. Again we define $E$ as in \eqref{defi_set}. This will produce the desired set. 
\end{proof}

Before proving  Theorem \ref{mainthm}, let us provide some basic definitions. For each $x\in[0,1)$, we will associate to it a sequence $\underline x:=x_1x_2\dots$ such that $x=\sum\limits_{i=1}^{\infty}\frac{x_i}{2^i}$, where $x_1, \,x_2,\,\dots\in\{0,1\}$. If $x$ has two distinct such representations, we call it a $\textit{dyadic rational}$. In this case, we will use $\underline{x}=x_1x_2\dots$
to denote the sequence that contains infinitely many zeros.
We will denote by $\mc{D}$ the set of all such numbers. Notice that $\mc{D}$ is countable, and therefore  a set of the form $A\setminus \mc{D}$  has the same Hausdorff dimension as $A$.
\begin{definition}[$n$-cell]
For $a_1, a_2, \dots a_n\in \{0,1\}$, we will denote the closed interval of length $2^{-n}$ starting from $\sum\limits_{i=1}^{n}\frac{a_i}{2^i}$ by $[a_1a_2\dots a_n]$, and call it an \textit{n-cell}.
\end{definition}

For a Borel set $B\subset [0,1)$, we will construct a Borel probability measure supported on $B$ and use the following lemma due to Billingsley (see e.g. Lemma 1.4.1 in \cite{CY}) to find a lower bound for the Hausdorff dimension of $B$. 

\begin{lemma}[Billingsley's lemma]
Let $B\subset[0,1]$ be Borel and let $\mu$ be a
finite Borel measure on $[0,1]$. Suppose $\mu(B)> 0$. Let $I_n(x)$ be the unique $n$-cell containing $x\in[0,1]\setminus\mc{D}$. If
\[\alpha\le \liminf\limits_{n\rightarrow\infty}\frac{\log\;\mu(I_n(x))}{\log|I_n(x)|}\le\beta\]
for all $x\in B\setminus\mc{D}$, then $\alpha\le\di_\HH(B)\le\beta$.
\end{lemma}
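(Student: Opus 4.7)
The plan is to establish the two inequalities $\alpha\le\di_\HH(B)$ and $\di_\HH(B)\le\beta$ separately, each by converting the pointwise dyadic hypothesis into a global statement about covers. Since $\mc{D}$ is countable, $\di_\HH(\mc{D})=0$, so I may freely replace $B$ by $B\setminus\mc{D}$. The key auxiliary observation used throughout is that any $U\subset[0,1]$ with $2^{-n-1}\le |U|<2^{-n}$ is contained in the union of at most two adjacent $n$-cells; this lets me compare $\mu(U)$ to the $\mu$-mass of dyadic cells of comparable diameter, which is precisely where the hypothesis supplies information.

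For the upper bound, fix $\eps>0$ and $M\ge 1$. The liminf hypothesis produces, for each $x\in B\setminus\mc{D}$, some $n(x)\ge M$ with $\mu(I_{n(x)}(x))\ge |I_{n(x)}(x)|^{\beta+\eps}$. Since any two dyadic cells are disjoint or nested, the countable family of cells so obtained admits a pairwise disjoint subfamily $\{J_k\}$ still covering $B\setminus\mc{D}$, with each $|J_k|\le 2^{-M}$. Then
\[
\sum_k |J_k|^{\beta+\eps}\le\sum_k\mu(J_k)\le\mu([0,1])<\infty,
\]
and sending $M\to\infty$ gives $\mc{H}^{\beta+\eps}(B)<\infty$, hence $\di_\HH(B)\le\beta+\eps$. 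Letting $\eps\to 0$ concludes.

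For the lower bound, fix $\eps>0$ and set
\[
B_N:=\bigl\{x\in B\setminus\mc{D}:\mu(I_n(x))\le |I_n(x)|^{\alpha-\eps}\text{ for every }n\ge N\bigr\}.
\]
The liminf condition gives $B\setminus\mc{D}=\bigcup_N B_N$, so for some $N_0$ one has $\mu(B_{N_0})>0$ (replacing $\mu$ by its restriction to $B\setminus\mc{D}$ if needed, so that $\mu(\mc{D})=0$). The two-cell observation combined with the defining property of $B_{N_0}$ yields $\mu|_{B_{N_0}}(U)\lesssim |U|^{\alpha-\eps}$ for every $U$ with $|U|<2^{-N_0}$: if a covering $n$-cell is disjoint from $B_{N_0}$ it contributes zero, and otherwise its $\mu$-mass is at most $|I_n|^{\alpha-\eps}$ by applying the definition of $B_{N_0}$ at any of its points inside the cell. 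The mass distribution principle then gives $\mc{H}^{\alpha-\eps}(B_{N_0})>0$, hence $\di_\HH(B)\ge\alpha-\eps$, and letting $\eps\to 0$ concludes.

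The main obstacle I anticipate is the lower bound, specifically the bookkeeping when a covering set $U$ straddles two adjacent dyadic cells only one of which meets $B_{N_0}$; the two-cell observation resolves this cleanly by bounding $\mu|_{B_{N_0}}$ on each cell regardless of whether it meets $B_{N_0}$. The liminf (rather than limsup) is essential in this direction, because it is precisely what produces a single threshold $N_0$ valid uniformly on a set of positive $\mu$-measure, which the mass distribution principle requires. A minor technical point is ensuring the liminf hypothesis is not vacuous after discarding $\mc{D}$; restricting $\mu$ to $B\setminus\mc{D}$ at the outset takes care of this, and is harmless provided the original $\mu$ is constructed to be non-atomic, as will be the case in the applications of this lemma later in the paper.
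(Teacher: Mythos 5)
The paper does not prove this lemma; it cites it as Lemma 1.4.1 of [CY]. Your argument is a correct, self-contained version of the standard proof, so there is no paper proof to compare against, but I will assess it on its own terms.

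Both directions are sound. For the upper bound you correctly extract, for each $x\in B\setminus\mc D$, a dyadic cell of level $\ge M$ with $\mu$-mass $\ge$ (diameter)$^{\beta+\eps}$, and then pass to the maximal cells among these; since distinct dyadic cells are nested or meet in at most a common endpoint, the maximal ones cover $B\setminus\mc D$ with bounded overlap, and $\sum_k|J_k|^{\beta+\eps}\le\sum_k\mu(J_k)\le 2\mu([0,1])$ gives $\mc H^{\beta+\eps}_{2^{-M}}(B\setminus\mc D)<\infty$ uniformly in $M$. (Worth stating explicitly that closed $n$-cells may share endpoints, so ``pairwise disjoint'' here really means interior-disjoint with each point in at most two of the $J_k$; the factor of $2$ is harmless.) For the lower bound the essential idea — uniformizing the liminf threshold on a sub-level-set $B_{N_0}$ of positive $\mu$-measure and then invoking the mass distribution principle with the two-adjacent-cells comparison — is exactly right, and this is where the liminf (rather than limsup) does its work, as you say.

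One precision issue: you correctly observe that if $\mu$ puts mass on $\mc D$ the statement as written can be vacuous, but ``replacing $\mu$ by its restriction to $B\setminus\mc D$'' does not by itself repair this, since the restriction has positive total mass exactly when $\mu(B\setminus\mc D)>0$ already held. The hypothesis that is actually needed, and should be stated, is $\mu(B\setminus\mc D)>0$. This is automatic whenever $\mu$ is non-atomic, and in this paper every invocation of the lemma uses a $\mu$ constructed to be supported on $B\setminus\mc D$, so the point is indeed moot here — but the phrasing should make clear it is an added hypothesis, not a cost-free normalization.
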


For $\underline x=x_1x_2\dots \in B\setminus \mc{D}$ and every $n\ge 0$, we define 
\begin{equation}
    \#_{\off}(n,\underline{x},B):=
    \begin{cases}
    1& \text{if}\;[x_1x_2\dots x_n 0]\cap (B\setminus \mc{D}) \neq \emptyset\; \text{and} \;[x_1x_2\dots x_n 1]\cap (B\setminus \mc{D})\neq \emptyset\\
    0 & \text{otherwise}
    \end{cases}
    \end{equation}
For every $n\ge 1$, we define \begin{gather}
    \mathcal O\FF\FF_n(B):=\min\limits_{\underline x\in B\setminus \mc{D}}\;\frac 1n\sum\limits^{n-1}_{i=0}\#_{\off}(i,\underline x,B).
\end{gather}
\begin{lemma}\label{lemmahauest}
For a compact set $B\subset [0,1)$, it holds that
\begin{equation}\label{hausest}
       \liminf\limits_{n\rightarrow\infty}\mathcal O\FF\FF_n(B)\le \di_{\mathcal H} B. 
\end{equation}
\end{lemma}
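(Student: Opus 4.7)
The plan is to reduce the lemma to Billingsley's lemma by constructing a Borel probability measure $\mu$ supported on $B$ whose decay rate on dyadic $n$-cells is governed by the bifurcation counts entering the definition of $\mathcal O\FF\FF_n(B)$.

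First, I would build $\mu$ recursively on the binary tree of dyadic cells meeting $B\setminus \mc{D}$ (assuming $B\setminus \mc{D}\ne \emptyset$; otherwise $B$ is countable, $\di_\HH B = 0$, and the claim holds trivially). Set $\mu([0,1))=1$. Given an $n$-cell $C=[a_1\dots a_n]$ intersecting $B\setminus \mc{D}$ and an already-assigned mass $\mu(C)$, distribute to its children by the rule: if both $[a_1\dots a_n 0]$ and $[a_1\dots a_n 1]$ meet $B\setminus \mc{D}$, assign mass $\mu(C)/2$ to each; otherwise pass the full mass to the unique non-empty child. Any $n$-cell disjoint from $B\setminus \mc{D}$ gets mass $0$. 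Since $B$ is compact, this consistent, finitely additive set function on the dyadic algebra extends uniquely to a Borel probability measure on $[0,1)$ supported on $B$; in particular $\mu(B)=1$.

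Second, I would observe that for every $\underline x = x_1 x_2\dots \in B\setminus \mc{D}$ and every $n\ge 1$, the unique $n$-cell $I_n(x)=[x_1\dots x_n]$ containing $x$ satisfies
\[
\mu(I_n(x)) = 2^{-K_n(x)}, \qquad K_n(x):=\sum_{i=0}^{n-1}\#_{\off}(i,\underline x,B),
\]
because the mass halves at precisely those levels $i<n$ where a bifurcation occurs along the path of $x$. Since $|I_n(x)|=2^{-n}$, this yields
\[
\frac{\log \mu(I_n(x))}{\log |I_n(x)|}=\frac{K_n(x)}{n}\ge \mathcal O\FF\FF_n(B)
\]
by the very definition of $\mathcal O\FF\FF_n(B)$ as a minimum over $\underline x\in B\setminus \mc{D}$. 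Taking $\liminf_{n\to\infty}$ and applying Billingsley's lemma with $\alpha=\liminf_n \mathcal O\FF\FF_n(B)$ (using $\mu(B)=1>0$) then delivers $\alpha\le \di_\HH B$.

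The main technical obstacle I expect is the verification that the tree-based recipe defines an honest Borel probability measure supported on $B$. Compactness of $B$ is essential: every nested sequence of non-empty dyadic cells in the tree has non-empty intersection inside $B$, so no mass ``leaks'' out of $B$. The countability of $\mc{D}$ is what allows us to work with $B\setminus \mc{D}$ throughout without affecting either the right-hand side $\di_\HH B$ or the applicability of Billingsley's lemma.
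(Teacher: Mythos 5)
Your proposal is correct and is essentially the paper's own proof: you construct the same halving measure on the dyadic tree restricted to $B\setminus\mc{D}$, derive the identity $\mu(I_n(x))=2^{-\sum_{i=0}^{n-1}\#_{\off}(i,\underline x,B)}\le 2^{-n\,\mathcal O\FF\FF_n(B)}$, and feed it into Billingsley's lemma exactly as the paper does. The only cosmetic difference is that you spell out the $B\setminus\mc{D}=\emptyset$ edge case and the role of compactness in the Carath\'eodory extension, which the paper passes over with a ``without loss of generality.''
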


This lemma is essentially Lemma 2 from \cite{SS10}. Here we impose an extra assumption on the set $B$ that it is compact. Without this assumption, the statement \eqref{hausest} may not be entirely correct. For instance, one can take $B=\Q\cap [0, 1]$, so that the left hand side of \eqref{hausest} is $1$, while the right hand side is $0$.

\begin{proof}[Proof of Lemma \ref{lemmahauest}.]
Without loss of generality we can assume $B\setminus \mc{D}\neq\emptyset$. Firstly we construct a Borel probability measure on $B_0:=B\setminus \mc{D}$. Define $\mu(B_0):=1$, and then define it on the algebra generated by half-open dyadic intervals by induction: Let $C$ be a dyadic interval $(k2^{-n},(k+1)2^{-n}]$ with $\mu(C\cap B_0)=t$. If both $C_1:=(k2^{-n},(2k+1)2^{-n-1}]\cap B_0$ and $C_2:=((2k+1)2^{-n-1},(k+1)2^{-n}]\cap B_0$ are non-empty, then define $\mu(C_1)=\mu(C_2)=\frac 12 t$. Otherwise define $\mu(C_i)= t$ if $C_i$ is non-empty, $0$ if empty $(i\,\in\{0,1\})$. Then it is not difficult to verify that $\mu$ is a pre-measure on the algebra generated by the intersection of dyadic intervals with $B_0$. Caratheodory's extension theorem and Caratheodory's criterion allow us to extend $\mu$ to a Borel probability measure on $B_0$, and then we  extend it to a Borel probability measure on $[0,1]$ by setting $\mu(X):=\mu(X\cap B_0)$. For any $\underline x:=x_1x_2\dots\in B_0$, let $I_n(x)$ denote the unique $n$-cell containing $x$. By induction on $n$, we see that 
$$
\mu(I_n(x))=2^{-\sum\limits_{i=0}^{n-1}\#_{off}(i,\underline x,B)}\le 2^{-n\cdot \mathcal O\FF\FF_n(B)}.
$$
Hence $$\liminf\limits_{n\rightarrow\infty}\frac{\log\;\mu(I_n(x))}{\log|I_n(x)|}\ge\liminf\limits_{n\rightarrow\infty}\mathcal O\FF\FF_n(B).$$ By using Billingsley's lemma, we have the inequality \eqref{hausest}.
\end{proof}

This lemma can be used to calculate the Hausdorff dimension of a certain class of sets. 
\begin{lemma} \label{calhaudim}
Let $S$ be a subset of $\N^+$. Let $A:=\{x\in[0,1): x_j=0\;\;\forall j\notin S\}$. Then
\begin{equation}
\di_{\mathcal H} (A)=\mathrm{den}(S).
\end{equation}
Here $\mathrm{den}(S)$ is the lower density of the set $S$ defined by
\[\mathrm{den}(S):=\liminf\limits_{n\to\infty} \frac{|\{1,2,\dots n\}\cap S|}{n}.\]
\begin{proof}[Proof of Lemma \ref{calhaudim}.]
First of all, it is not difficult to see that $A$ is closed, therefore compact. 
By Lemma \ref{lemmahauest}, we have 
\begin{equation}
\di_{\mathcal H} (A)\ge \liminf\limits_{n\rightarrow\infty}\mathcal O\FF\FF_n(A)=\text{den}(S).
\end{equation}
On the other hand, for every $\epsilon>0$, there exists infinitely many $n_1<n_2<\cdots$ such that $\frac{|\{1,2,\dots n_i\}\cap S|}{n_i}<\text{den}(S)+\epsilon$  for each $i$. To compute the Hausdorff dimension of $A$, we will first cover $A$ by almost disjoint closed $n_i$-cells. Note that there are at most $2^{|\{1,2,\dots n_i\}\cap S|}\le 2^{n_i(\text{den}(S)+\epsilon)}$ many such cells that intersect $A$. Therefore by the definition of the Hausdorff measure, we have 
\begin{equation}
    \mathcal H^{\text{den}(S)+2\epsilon}_{2^{-n_i}}(A)\le 2^{n_i(\text{den}(S)+\epsilon)}\,2^{-n_i(\text{den}(S)+2\epsilon)}.
\end{equation}
Taking a limit $i\to \infty$, one sees that 
    $\mathcal H^{\text{den}(S)+2\epsilon}(A)=0,$
for every $\epsilon>0$. Therefore, we can conclude that $\di_{\mathcal H}(A)\le \text{den}(S)$. This finishes the proof of the lemma. 
\end{proof}
\end{lemma}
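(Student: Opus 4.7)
The plan is to prove the two inequalities $\di_{\mathcal H}(A)\ge \mathrm{den}(S)$ and $\di_{\mathcal H}(A)\le \mathrm{den}(S)$ separately, using Lemma \ref{lemmahauest} for the lower bound and a direct dyadic covering for the upper bound.

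First I would observe that $A$ is closed in $[0,1)$ (the constraint $x_j=0$ for $j\notin S$ is preserved under the dyadic coding, and limits of sequences in $A$ still satisfy it provided we use the canonical expansion), hence $A$ is compact, so Lemma \ref{lemmahauest} applies. The key combinatorial computation is to identify $\#_{\off}(i,\underline x, A)$ explicitly: for any $\underline x=x_1 x_2\dots\in A\setminus\mc D$, the cell $[x_1\dots x_i 1]$ can intersect $A\setminus\mc D$ only if position $i+1$ is allowed to be $1$, i.e.\ iff $i+1\in S$; conversely, if $i+1\in S$ then both cells $[x_1\dots x_i 0]$ and $[x_1\dots x_i 1]$ contain uncountably many points of $A\setminus\mc D$ (freely choose the remaining coordinates indexed by $S$). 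Thus $\#_{\off}(i,\underline x,A)=\mathbf{1}_{i+1\in S}$ independently of $\underline x$, so
\[
\mathcal O\FF\FF_n(A)=\frac{1}{n}\sum_{i=0}^{n-1}\mathbf{1}_{i+1\in S}=\frac{|\{1,2,\dots,n\}\cap S|}{n}.
\]
Taking $\liminf$ and applying Lemma \ref{lemmahauest} gives $\di_{\mathcal H}(A)\ge \mathrm{den}(S)$.

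For the reverse inequality I would use direct covers. Fix $\epsilon>0$ and choose, via the definition of $\liminf$, an infinite sequence $n_1<n_2<\cdots$ with $|\{1,\dots,n_i\}\cap S|/n_i<\mathrm{den}(S)+\epsilon$. Cover $A$ by the closed $n_i$-cells $[a_1\dots a_{n_i}]$ with $a_j=0$ for $j\le n_i$, $j\notin S$; there are at most $2^{|\{1,\dots,n_i\}\cap S|}\le 2^{n_i(\mathrm{den}(S)+\epsilon)}$ such cells, each of diameter $2^{-n_i}$. Hence
\[
\mathcal H^{\mathrm{den}(S)+2\epsilon}_{2^{-n_i}}(A)\le 2^{n_i(\mathrm{den}(S)+\epsilon)}\cdot 2^{-n_i(\mathrm{den}(S)+2\epsilon)}=2^{-n_i\epsilon}\xrightarrow{i\to\infty}0,
\]
so $\mathcal H^{\mathrm{den}(S)+2\epsilon}(A)=0$ and $\di_{\mathcal H}(A)\le \mathrm{den}(S)+2\epsilon$; letting $\epsilon\to 0$ yields $\di_{\mathcal H}(A)\le \mathrm{den}(S)$.

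The only step that requires real care is the identification $\#_{\off}(i,\underline x,A)=\mathbf{1}_{i+1\in S}$: one must verify that the restriction to $A\setminus\mc D$ (rather than $A$) does not spoil the count, which amounts to noting that if $i+1\in S$ then both branches of the tree continue to contain non-dyadic points, because $S$ is infinite in any interesting case, and the dyadic rationals form a countable exceptional set. Everything else is a routine application of Billingsley's lemma (wrapped inside Lemma \ref{lemmahauest}) and a direct upper-bound cover argument, so no further obstacle should arise.
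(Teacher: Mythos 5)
Your proof is correct and follows essentially the same approach as the paper: Lemma \ref{lemmahauest} supplies the lower bound $\di_{\mathcal H}(A)\ge\mathrm{den}(S)$, and a direct cover of $A$ by $n_i$-cells along a subsequence realizing the $\liminf$ gives the matching upper bound. The only difference is that you explicitly verify $\#_{\off}(i,\underline x,A)=\mathbf{1}_{i+1\in S}$ (and hence $\mathcal O\FF\FF_n(A)=|\{1,\dots,n\}\cap S|/n$), a computation the paper states without proof.
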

\begin{lemma}\label{massdisprin}
Let $A$ be a Borel subset of $\R^d$. Suppose there exists $\delta >0$ and a Borel probability measure $\mu$ supported on $A$ such that $\mu(I)\le c(|I|)|I|^{\delta}$ for every measurable set $I$, with $\lim\limits_{|I|\rightarrow 0} c(|I|)=0$, then $\mathcal H^{\delta}(A)=\infty$.
\end{lemma}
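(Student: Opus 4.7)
The plan is to adapt the classical mass distribution principle. The classical version yields $\mathcal{H}^\delta(A)\ge \mu(A)/C$ whenever $\mu(I)\le C|I|^\delta$ holds with a uniform constant $C$; here the hypothesis replaces $C$ by a function $c(|I|)$ that decays to zero as $|I|\to 0$, so running the same argument should amplify the positive lower bound into $+\infty$.

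First I would recall that the $\delta$-dimensional Hausdorff measure is defined via the $\epsilon$-approximants
\[\mathcal{H}^\delta_\epsilon(A):=\inf\Bigl\{\sum_i |U_i|^\delta : A\subset \bigcup_i U_i,\ |U_i|<\epsilon\Bigr\},\qquad \mathcal{H}^\delta(A)=\lim_{\epsilon\to 0^+}\mathcal{H}^\delta_\epsilon(A),\]
where $|U|$ denotes the diameter. For each $\epsilon>0$, set $C(\epsilon):=\sup_{0<r\le \epsilon} c(r)$; by the hypothesis $\lim_{|I|\to 0}c(|I|)=0$ we have $C(\epsilon)\to 0$ as $\epsilon\to 0^+$. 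Then for any cover $\{U_i\}$ of $A$ with $|U_i|<\epsilon$, I would chain
\[1=\mu(A)\le \sum_i \mu(U_i)\le \sum_i c(|U_i|)|U_i|^\delta \le C(\epsilon)\sum_i |U_i|^\delta,\]
which forces $\sum_i |U_i|^\delta \ge 1/C(\epsilon)$. Taking the infimum over all admissible covers yields $\mathcal{H}^\delta_\epsilon(A)\ge 1/C(\epsilon)$, and sending $\epsilon\to 0^+$ produces $\mathcal{H}^\delta(A)=\infty$.

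There is essentially no real obstacle here: the argument is the classical mass distribution principle carried out with the variable constant $c(|I|)$ in place of a uniform $C$. The only very minor point is to confirm that $C(\epsilon)$ is finite for small $\epsilon$, which follows automatically from $c(r)\to 0$ as $r\to 0^+$, and to exploit the hypothesis that the bound $\mu(I)\le c(|I|)|I|^\delta$ holds for every measurable set $I$ (not merely for balls), which is exactly what allows direct term-by-term summation over an arbitrary Hausdorff cover.
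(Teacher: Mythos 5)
Your argument is correct and is essentially the same as the paper's: both adapt the classical mass distribution principle by replacing the uniform constant with a bound that tends to $0$ as the cover gets fine, forcing $\sum_i|U_i|^\delta$ to blow up. The paper phrases the smallness via ``choose $\delta_0$ with $c(x)<\epsilon$ for $x\le\delta_0$'' while you use $C(\epsilon)=\sup_{0<r\le\epsilon}c(r)$; these are equivalent, and the paper also notes the minor point of passing to compact (hence measurable) cover elements, which you may want to mention since arbitrary Hausdorff covers need not be measurable.
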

\begin{proof}[Proof of Lemma \ref{massdisprin}.]
The proof is essentially the same as the proof of the mass distribution principle. For any $\epsilon>0$, there exists $\delta_0>0$ such that $c(x)<\epsilon$ for any $0<x\le \delta_0$. For any $t<\delta_0$, suppose $\{J_i\}_{i=1}^{\infty}$ covers $A$ with $|J_i|\le t$ (without loss of generality we can assume each $J_i$ is compact), then 
\begin{equation}
 1=\mu(A)\le \sum\limits_{i=1}^{\infty}\mu(J_i)\le\sum\limits_{i=1}^{\infty}c(|J_i|)\, |J_i|^\delta \le\sum\limits_{i=1}^{\infty}\epsilon\, |J_i|^\delta.
 \end{equation}
In other words, we have $\sum\limits_{i=1}^{\infty}|J_i|^\delta\ge \frac 1\epsilon$. Hence $\mathcal H^\delta_t(A)\ge \frac 1\epsilon$. Let $t\rightarrow 0$ we obtain $\mathcal H^\delta(A)\ge \frac 1\epsilon$. Since $\epsilon$ is arbitrary, we have $\mathcal H^\delta(A)=\infty$.
\end{proof}
The following lemma shows that we can control the Hausdorff dimension of finitely many $A_i$.
\begin{lemma} \label{finitecase}
Given $i\ge 1$. Let $0< \alpha_1\le\alpha_2\le\dots \alpha_i< 1$. Then there exists a non-empty compact set $A^i\subset [0,1]$ such that $\di_{\mathcal H} A^i_j=\alpha_j$ for every $1\le j\le i$. Moreover, one can pick such an $A^i$ satisfying $\mathcal H^{\alpha_1}(A^i)=\infty$. Here $A^i_j$ is defined as $\underbrace{A^i+A^i+\cdots A^i}_j$.
\end{lemma}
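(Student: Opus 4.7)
My plan is to construct $A^i$ as a compact subset of $[0,1]$ whose expansions in a sufficiently large integer base $b$ obey a carefully designed block-by-block digit restriction, generalizing the restricted-digit template of Lemma~\ref{calhaudim}. Taking $b$ large removes any carry issues in $j$-fold sums for $j\le i$: if each base-$b$ digit lies in $\{0,1,\ldots,d\}$ with $id<b$, the $j$-fold sumset $A^i_j$ is determined by the digit structure position-by-position, so the combinatorial and measure-theoretic arguments of Section~\ref{sectionthree} can be used to compute $\di_\HH(A^i_j)$ for every $j\le i$ in parallel.

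Partition $\NN^+$ into blocks $I_1,I_2,\ldots$ of lengths $L_k\to\infty$, assign to each block a type $\tau(k)\in\{1,\ldots,i\}$, and within a type-$j$ block choose a digit pattern $\mathcal S_{j,k}\subset\{0,1,\ldots,d_j\}^{L_k}$ tuned so that the local dimension contribution of $A^i_j$ inside a type-$j$ block equals $\alpha_j$, while the local dimension contribution of $A^i_{j'}$ inside a type-$j$ block is at least $\alpha_{j'}$ for every $j'\ne j$. In the simplest case this is achieved by letting $\mathcal S_{j,k}$ be all sequences over a fixed alphabet supported on a contiguous sub-block $T_k\subset I_k$ of length $\lfloor c_j L_k\rfloor$ with $c_j\in(0,1)$; in general, when the $\alpha_j$'s grow sharply, one uses sparser Sidon-type digit patterns to decouple the different sumset cardinalities. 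Ordering the types $\tau(k)$ so that each type appears infinitely often and choosing the $L_k$ to grow quickly enough that every type-$j$ block dominates the preceding history, the liminf computation for Hausdorff dimension lands on $\alpha_j$ along the subsequence of type-$j$ endpoints $n=N_k:=L_1+\cdots+L_k$.

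Verification will proceed in four parts. Compactness of $A^i$ is immediate from the block-cylinder description. The upper bound $\di_\HH(A^i_j)\le\alpha_j$ comes from covering $A^i_j$ at scale $b^{-N_k}$ by $\prod_{\ell\le k}|j\mathcal S_{\tau(\ell),\ell}|$ cells, then normalizing by $N_k\log b$ and taking the liminf along type-$j$ endpoints; the choice of digit patterns makes this limit equal $\alpha_j$. The matching lower bound follows from an adaptation of Lemma~\ref{lemmahauest} to base $b$ and to the alphabets $\{0,\ldots,jd_{\tau(k)}\}$, in which $\#_{\off}$ is replaced by a quantity detecting all $(jd_{\tau(k)}+1)$-way branches of $A^i_j$ at positions in $T_k$; the rest of the Billingsley-type argument then goes through verbatim. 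Finally, $\HH^{\alpha_1}(A^i)=\infty$ is obtained by equipping $A^i$ with the natural product probability measure $\mu$ (uniform on each block's allowed configurations, independent across blocks), choosing the $L_k$ to grow fast enough that $\mu(I)/|I|^{\alpha_1}\to 0$ as $|I|\to 0$, and invoking Lemma~\ref{massdisprin}.

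The technical heart of the argument will be the simultaneous calibration of the block digit patterns: the target identity $\di_\HH(A^i_j)=\alpha_j$ for each $j$ produces an interlocking system of cardinality constraints on $\mathcal S_{j,k}$ and all its $j'$-fold sumsets. When the $\alpha_j$'s grow gently, a straightforward alphabet-and-support-size design suffices, but when the ratio $\alpha_j/\alpha_{j'}$ for $j'<j$ exceeds what a uniform alphabet can produce (namely $\log(jd+1)/\log(j'd+1)$, which is bounded in $d$), one must switch to more structured digit patterns that concentrate branching in $A^i_j$ without contributing branching to $A^i_{j'}$. Reconciling all $i$ identities with equality---not merely as one-sided inequalities---and simultaneously with the infinite-$\alpha_1$-measure growth condition on $\{L_k\}$, is where I expect the bulk of the proof to lie.
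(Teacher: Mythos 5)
Your plan builds $A^i$ as a \emph{single} restricted--digit Cantor set: in each block $I_k$ of base-$b$ positions, the digit string $(x_s)_{s\in I_k}$ must lie in a prescribed pattern set $\mathcal S_{\tau(k),k}$, and you propose to tune the pattern sets so that the $j$-fold sumset has local branching density $\alpha_j$. The difficulty is that for any such Cartesian-product digit restriction, and any $j$, one has the universal bound $|j\mathcal S|\le|\mathcal S|^j$ on the sumset pattern, whence the cumulative branching count of $A^i_j$ at every scale is at most $j$ times that of $A^i$ (up to an $O(\log j)$ additive error absorbed by the large base). Since for sets of this type the Hausdorff dimension coincides with the $\liminf$ of the cumulative branching density, this forces $\di_\HH(A^i_j)\le j\,\di_\HH(A^i)$, i.e.\ $\alpha_j\le j\alpha_1$. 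But the lemma places no such restriction on the $\alpha_j$'s: already for $i=2$, $\alpha_1=0.1$, $\alpha_2=0.9$ your construction cannot produce the target dimensions. Your remark about switching to Sidon/$B_j$-type patterns does not help, because Sidon structure only pushes $|j\mathcal S|$ \emph{up to} the ceiling $|\mathcal S|^j$, never past it; the obstruction is exactly that ceiling.

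The paper escapes this by not taking $A^i$ to be a single restricted-digit set. It takes $A^i=\bigcup_{l=1}^{i}B^i_l$, where each $B^i_l$ is a restricted-digit set and the ``free'' position blocks of the various $B^i_l$ are pairwise disjoint by design, indexed via the decomposition $\{0,\dots,\tfrac{i(i+1)}2-1\}=Z^i_1\sqcup\cdots\sqcup Z^i_i$ with $|Z^i_t|=t+1$ for $t<i$. Then $A^i_j$ is a finite union of sums $B^i_{l_1}+\cdots+B^i_{l_j}$, and the combinatorial pigeonhole step (any $j$ sets of the form $\{1,\dots,j+1\}\setminus\{l_{j'}\}$ share a common element) shows each such sum is restricted along a whole family of type-$j$ blocks, giving $\di_\HH(A^i_j)\le\alpha_j$; the lower bound comes from $B^i_1+\cdots+B^i_j$, whose free blocks interleave. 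The union structure is what lets $\di_\HH(A^i)$ --- a $\liminf$ quantity --- stay at $\alpha_1$ even though, at scales relevant to $A^i_j$, the set $A^i$ looks much ``fatter.'' In box-counting language: $\di_\HH(A^i)=\alpha_1$ but $\underline\dim_B(A^i)$ is large, and $\di_\HH(A^i_j)\le j\,\underline\dim_B(A^i)$ rather than $j\,\di_\HH(A^i)$. Your construction rules out exactly this gap between Hausdorff and box dimension, and so cannot reach the full range of $(\alpha_1,\dots,\alpha_i)$. To repair the proposal you would need to reintroduce a union (or non-product) structure that decouples $\di_\HH$ from $\underline\dim_B$; at that point you would be reproducing the paper's construction.
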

\begin{proof}[Proof of Lemma \ref{finitecase}.]
We decompose $\{0,1,\dots \frac{i(i+1)}2-1\}$ into $i$ disjoint sets: \begin{equation}
\begin{split}
    & Z^i_1=\{a^i_{1,1},a^i_{1,2}\}:=\{0,1\},\,Z^i_2=\{a^i_{2,1},a^i_{2,2},a^i_{2,3}\}:=\{2,3,4\},\dots,\\
    &Z^i_{i-1}=\{a^i_{i-1,1},a^i_{i-1,2},\dots, a^i_{i-1,i}\}:=\{\frac {(i+1)(i-2)}2,\dots, \frac{i(i+1)}2-2\},\\
    & Z^i_i=\{a^i_{i,1}\}:= \{\frac{i(i+1)}2-1\}.
\end{split}
    \end{equation}
Then we choose a rapidly ``increasing" sequence
of intervals $\{[\gamma^i_s,\eta^i_s]\}_{s=1}^{\infty}$ inductively with length
$d^i_s:=\eta^i_s-\gamma^i_s+1$, $\gamma^i_1>2$ and \begin{equation}\label{incresing_intervals_1}
\eta_{s+1}^i\ge\gamma^i_{s+1}>2^{2^s}\eta^i_s\ge
2^{2^s}\sum\limits_{n=1}^sd^i_n.   
\end{equation} 
Since
$\alpha_1<1$, we can choose $\gamma^i_s$ large enough so that
\begin{equation}\label{incresing_intervals_2}
2^{-(\gamma^i_s-1-\sum\limits_{n=1}^{s-1}d^i_n))}<\frac 1s\cdot
2^{-\alpha_1\gamma^i_s}.    
\end{equation}
Let $t$ be the unique number depending on $s$ such that
there exist integers $k$ and $q$ such that $s=\frac {i(i+1)k}2+a^i_{t,q}$. 
We take \begin{gather}\label{incresing_intervals_3}
\eta^i_s:= \max\Big\{\gamma^i_s,\bigg [ \frac{\gamma^i_s-1-\sum\limits_{n=1}^{s-1}d^i_n-\log_2 s}{\alpha_t}\bigg ]\Big\}.
\end{gather}
Here $[x]$ is the floor function of $x$. This finishes the definition of the above sequence of intervals. \\

For $1\le l \le i $, we define $\mathcal{B}^i_l$ to be the set
\begin{equation}
\begin{split}
    & \mathbb \bigcup_{\substack{k\ge 1, t\le i-1,\\ q\in \{1,2\dots, t+1\}\setminus{\{l\}}}} \big [\gamma^i_{\frac{i(i+1)k}2+a^i_{t, q}}\;,\,\eta^i_{\frac{i(i+1)k}2+a^i_{t, q}}\big ] \bigcup_{k\ge 1,k\in\N^+}\big[\gamma^i_{\frac{i(i+1)k}2+a^i_{i, 1}}\;,\,\eta^i_{\frac{i(i+1)k}2+a^i_{i, 1}}\big].
\end{split}
\end{equation}
Next, we define
\begin{equation}
    B^i_l :=\{x\in[0,1):x_s=0\;\;\text{if}\;\; s\in \mathcal{B}^i_l\;\},
\end{equation}
and
\begin{equation}
    A^i:= \bigcup_{1\le l\le i}B^i_l.
\end{equation}
Note that $A^i$ is compact. 
For every $1\le j\le i$, and $1\le l_1,l_2\dots l_j\le i$, we claim that there exists $q$ such that  
$B^i_{l_1}+B^i_{l_2}+\cdots+ B^i_{l_j}$ is contained in 
\begin{equation}\label{sum_i_many}
\{x\in [0,j]:y=\{x\}, y_s=0 \textup{ whenever }s\in \bigcup_{k\ge
1}\;[\gamma^i_{\frac{i(i+1)k}2+a^i_{j, q}}\;,\,\eta^i_{\frac{i(i+1)k}2+a^i_{j, q}}-2j\;]\}.
\end{equation}
When $j=i$, the statement is immediate, and we have no choice but choosing $q=1$. In the case $j<i$, the statement follows from the fact that $j$ sets of the form
\begin{equation}
    \{1,2,\dots j+1\}\setminus \{l_{j'}\}, \ j'=1, 2, \dots, j,
\end{equation}must share at least one common element. \\

Now we are ready to apply Lemma \ref{calhaudim} to compute the Hausdorff dimension of the set \eqref{sum_i_many}. Recall the setup in \eqref{incresing_intervals_1}--\eqref{incresing_intervals_3}. In particular, we have 
\begin{equation}\label{191003e2.18}
    \lim_{s\to \infty} \frac{\sum\limits_{n=1}^sd^i_n}{\eta_{s+1}}=0,
\end{equation}
and 
\begin{equation}\label{191003e2.19}
\lim_{k\to \infty} \gamma^i_{\frac{i(i+1)k}2+a^i_{j, q}}\,\big / (\eta^i_{\frac{i(i+1)k}2+a^i_{j, q}}-2j)=\alpha_j.
\end{equation}
%
Therefore, we apply Lemma \ref{calhaudim} and obtain that $\di_{\mathcal H} (A^i_j)\le \alpha_j$. \\

On the other hand,
$A^i_j$ contains $ B^i_1+B^i_2+\cdots +B^i_j$, which further contains 
\begin{equation}
\begin{split}
&\Big\{x\in [0,1):x_s=0\;\;\text{whenever}\;\; s\in \bigcup_{\substack{k\ge 1, t\ge j,\\q}}
\;[\gamma^i_{\frac{i(i+1)k}2+a^i_{t, q}}\;,\,\eta^i_{\frac{i(i+1)k}2+a^i_{t, q}}\;]\Big\}
\end{split}
\end{equation}
By combining Lemma \ref{calhaudim} with estimates \eqref{191003e2.18} and \eqref{191003e2.19}, we obtain $\di_{\mathcal H} (A^i_j)\ge \alpha_j$. This finishes the proof that $\di_{\mathcal H} (A^i_j)=\alpha_j$.\\

The remaining part is to estimate $\mathcal H^{\alpha_1}(A^i)$. To achieve the goal, we construct a Borel
probability measure $\mu^i$ supported on $B_1^i$ which is defined via induction. We start with $\mu^i([0, 1))=1.$ Let $D\subset [0, 1)$ be a left open and right closed dyadic interval. Denote $\mu_D:=\mu(D)$. Without loss of generality, $\mu_D>0$. Write $D=D_L\cup D_R$, where $D_L$ and $D_R$ are both left open and right closed dyadic intervals of length $|D|/2$.  If $(D_L\setminus \mc{D})\cap B^i_1\neq \emptyset$ and $(D_R\setminus \mc{D})\cap B^i_1\neq \emptyset$, then we set $\mu(D_L)=\mu(D_R)=\mu_D/2$. Otherwise, we assign the full measure of $D$ to whichever set between $D_L$ and $D_R$ that has non-empty intersection with $B^i_1\setminus \mc{D}$. This finishes the definition of the pre-measure supported on $B^i_1$. Since $B^i_1$ is compact, it follows that the measure $\mu^i$ extended by this pre-measure is also supported on $B^i_1$.\\

We claim that 
\begin{equation}\label{191004e4.21}
\mu^i(I)\le \frac 1s |I|^{\alpha_1}
\end{equation} 
holds for every $s \ge 1$ and every $n$-cell $I$ with $n\ge \gamma^i_s$. This claim, combined with Lemma \ref{massdisprin}, will imply that 
\[
\mathcal H^{\alpha_1}(A^i)\ge \mathcal H^{\alpha_1}(B_1^i)=\infty.\]
To see this, it suffices to consider
$n=\eta_s^i$ for the same $s$. If $\gamma_s^i=\eta_s^i$, we have \begin{equation}
\mu^i(I)\le 2^{-(\gamma^i_s-1-\sum\limits_{n=1}^{s-1}d^i_n))}<\frac 1s\cdot
2^{-\alpha_1\gamma^i_s}=\frac 1s |I|^{\alpha_1}.
\end{equation} 
Otherwise, we have 
\begin{equation}
\mu^i(I)\le 2^{-(\gamma^i_s-1-\sum\limits_{n=1}^{s-1}d^i_n))}\le \frac 1s\cdot
2^{-\alpha_t\eta^i_s}\le \frac 1s\cdot 2^{-\alpha_1\eta^i_s}=\frac 1s |I|^{\alpha_1}
\end{equation}
by the definition of $\eta_s^i$, where $t$ is defined as in the line below equation \eqref{incresing_intervals_2}. This finishes the proof of \eqref{191004e4.21}.
\end{proof}

%
%
%
%
%
%
%
%
%

 Now we can
prove Theorem \ref{mainthm}. Again we should  emphasize that this is inspired by \cite{SS10} with additional restriction on the Hausdorff measure.\\
\begin{proof}[Proof of Theorem \ref{mainthm}.]For $i\ge 1$, we construct a compact
set $A^i\subset[0,1)$ as in Lemma \ref{finitecase} such that $A^i_j:=\underbrace{A^i+\cdots A^i}_j$ has Hausdorff dimension equal to
$\alpha_j$ for $j\le i$, and we will use the same notation as in Lemma \ref{finitecase}, for instance $\gamma_s^i$ and $\eta_s^i$. 
\\For $j\le i$ and $q\in\N^+$, we define the set of integers $\Xi^i_{j,q}$ to be
\begin{equation}\label{Xi}
\bigcup_{k\ge
1}\;[\gamma^i_{\frac{i(i+1)k}2+a^i_{j,q}}\;,\,\eta^i_{\frac{i(i+1)k}2+a^i_{j,q}}-2j\;],
\end{equation}
and for each $p\in \N^+$, we define
\begin{equation}
Q^i_j(p):=\bigcup_{q} \Big\{x\in[0,1):x_s=0\;\;\text{whenever}\;s\le p-2j\;\text{and}
\;s\in\;\Xi^i_{j,q}\Big\}.
\end{equation}
Similar to the proof of Lemma \ref{finitecase}, for any $i\ge j$, there exists an integer $p_{i}$ depending on $i$ only such that when $p>p_{i}$, there exist dyadic intervals $\{I^{r}_{ij}(p)\}_r$ and $q_i(p)$ depending on $p$ and $i$ only such that: $\{I^{r}_{ij}(p)\}_r$ covers $Q_j^i(p)$, and the length of each interval in $\{I^{r}_{ij}(p)\}_r$ is at least $2^{-p}$ and smaller than $2^{-q_i(p)}$, where $\lim\limits_{p\rightarrow\infty}q_i(p)=\infty$ and \begin{equation} \label{intest}
\sum\limits_r |I^{r}_{ij}(p)|^{\alpha_j+\frac 1{2i}}<1.
\end{equation}
Also, denote
\begin{equation}
    T_j^i(p):=\Big\{x\in[0,1):x_s=0\;\text{if}\; s\le p \;\text{and}\;s\in \bigcup_{\substack{k\ge 1,t\ge j \\ q}}
\;[\gamma^i_{\frac{i(i+1)k}2+a_{t,q}^i}\;,\,\eta^i_{\frac{i(i+1)k}2+a_{t,q}^i}\,]\Big\}.
\end{equation}
As discussed in Lemma \ref{lemmahauest}, we can construct a probability measure
$\mu^i_j(p)$ supported on $T^i_j(p)$ such that
\begin{equation}\label{estmeasure}
\mu^i_j(I)\le |I|^{\alpha_j-\frac 1i},
\end{equation}
for any dyadic intervals $I$ with length $\le 2^{-m_{i}}$, where $m_i$ depends on $i$ only.\\
Then we are ready to combine all the previous observations, and construct a rapidly increasing sequence of positive integers $\{\zeta_i\}_{i=1}^{\infty}$ inductively such that 
\begin{equation}\label{restrictions}
\begin{split}
    &\zeta_i>2^i \zeta_{i-1},\\
    &\zeta_i>p_{i},\\
    &\zeta_{i}>(i+1)m_{i+1},\\
    &\zeta_i>\gamma_2^i,\\
    q_i(\zeta_i)>2i&\sum\limits_{j=1}^{i-1}\zeta_j\;(\text{note}\lim\limits_{p\rightarrow\infty}q_i(p)=\infty)
    \end{split}
    \end{equation}and define
$s_i:=\sum\limits_{j=1}^{i-1}\zeta_j$. We construct the compact set $A$ to be
\begin{equation}
\Big\{x\in[0,1):\text{For any } i\ge
1,\;x_{s_i+1}x_{s_i+2}\dots x_{s_{i+1}}=y_1y_2\dots y_{\zeta_i}\text{ for some } y\in A^i\}.
\end{equation}
\\Denote 
\begin{equation}
\begin{split}
S^i:= &\{x\in[0,1): x_{1}x_{2}\dots x_{\zeta_i}=y_1y_2\dots y_{\zeta_i}\text{ for some }
y\in A^i\;\text{and}\;x_s=0 \;\text{for} \;s> \zeta_i\},\\
M^i_j:=&\{x\in[0,1): x_{1}x_{2}\dots
x_{\zeta_i}=y_1y_2\dots y_{\zeta_i}\text{ for some } y\in (\underbrace{S^i+\cdots S^i}_j)\cap[0,1)\;\},\\
N^i_j:=&\{x\in[0,1): x_{1}x_{2}\dots x_{\zeta_i}=y_1y_2\dots y_{\zeta_i}\text{ for some } y\in
(\underbrace{M^i_1+\cdots M^i_1}_j)\cap[0,1)\;\}.
\end{split}
\end{equation}
For any $j$, define $A_j:=\underbrace {A+\cdots A}_j$, then we have
\begin{equation}\label{uplowA_j}
\begin{split}
&A_j\supset\{x\in[0,1): \forall\;i\;\exists \;z\in M^i_j\;\text{such that }x_{s_i+1}x_{s_i+2}\dots x_{s_{i+1}}=z_1z_2\dots z_{\zeta_i}\}
\\
&A_j\subset\{x\in [0,j]: y=\{x \},\forall\; i\;\exists \;z\in N^i_j\;\text{such that } y_{s_i+1}y_{s_i+2}\dots y_{s_{i+1}}=z_1z_2\dots z_{\zeta_i}\}
\end{split}
\end{equation}
Next we will estimate the Hausdorff dimension of $A_j$. Recall the setup in \eqref{Xi}-\eqref{intest}. Notice that $\zeta_i>p_{i}$ and $N^i_j\subset Q^i_j(\zeta_i)$. It follows that there exist dyadic intervals $\{I^{r}_{ij}\}_r$ and $q_i(\zeta_i)$ such that: $\{I^{r}_{ij}\}_r$ covers $Q_j^i(\zeta_i)$, hence covers $N^i_j$; length of each interval $I^r_{ij}$ is at least $2^{-\zeta_i}$ and smaller than $2^{-q_i(\zeta_i)}$; more importantly,
\begin{equation}
\sum\limits_r |I^{r}_{ij}|^{\alpha_j+\frac 1{2i}}<1.
\end{equation}
Thus by \eqref{uplowA_j} and the fact that $q_i(\zeta_i)>2is_i$ \eqref{restrictions}, for $i\ge j$, we can cover $A_j$ by some scale-reduced copies of $\{I^r_{ij}\}_r$ with length smaller than $2^{-2is_i}$, called $\{{J}^u_{ij}\}_u$, such that 
$$\sum\limits_u |J^u_{ij}|^{\alpha_j+\frac 1i}\le 2^{s_i}\cdot 2^{-s_i(\alpha_j+\frac 1i)}\cdot 2^{-\frac {q_i(\zeta_i)}{2i}}<1$$
by using \eqref{restrictions}. Taking a limit $i\rightarrow\infty$, we conclude that $\dim_{\mathcal H} A_j\le \alpha_j$.\\
On the other hand, we construct a Borel probability measure and apply mass distribution principle to get the lower bound on Hausdorff dimension. Recall the measures $\mu_j^i$ supported on $T_j^i(\zeta_i)$ satisfy \eqref{estmeasure}. Note that we have $T_j^i(\zeta_i)\subset M_j^i$ and \eqref{uplowA_j}. If we define a pre-measure $\mu_j$ by \begin{equation}
    \mu_j([x_1x_2\cdots x_s]):=\mu^1_j(\,[x_1x_2\cdots
x_{s_2}]\,)\mu^2_j(\,[x_{s_2+1}\cdots x_{s_3}]\,)\cdots \mu^l_j(\,[x_{s_l+1}\dots x_s]\,),
\end{equation}
where $s_{l} < s\le s_{l+1}$, then $\mu_j$ can be extended to a Borel probability measure supported on $A_j$. \\
For $j\ge 1$, we fix $i>\max\{j,\frac 2{\alpha_j}\}$. For small enough dyadic intervals\\$I=[x_1\cdots x_{s_{l+1}}\cdots x_s]$, where $s_{l+1}\le s<s_{l+2}$: If $s-s_{l+1}<m_{l+1}$, then 
\begin{equation}
\mu_j(I)\le |\,[x_{s_l+1}\cdots x_{s_{l+1}}]\,|^{\alpha_j-\frac 1i}\le c(i)\,|I|^{\alpha_j-\frac 2i}. \label{estformeasure1}
\end{equation}
This inequality follows from the restriction $\zeta_l>(l+1) m_{l+1}$ \eqref{restrictions}. If $s-s_{l+1}\ge m_{l+1}$, then for large $s$ we have  
\begin{equation}
\mu_j(I)\le |\,[x_{s_l+1}\cdots x_s]\,|^{\alpha_j-\frac 1i}\le c(i)\,|I|^{\alpha_j-\frac 1i}.\label{estformeasure2}
\end{equation}  By using mass distribution principle (see e.g. Lemma 1.2.8 in \cite{CY}), we have $\dim_{\mathcal H} A_j\ge \alpha_j-\frac 2i$. Taking $i\rightarrow\infty$, we conclude that $\dim_{\mathcal H} A_j=\alpha_j$ for any $j$.\\\\
Now we estimate the Hausdorff measure of $A$ using Lemma \ref{massdisprin}. Recall the measures $\mu^i$ defined in proof of Lemma \ref{finitecase}, which satisfies \eqref{191004e4.21}:
$$\mu^i(I)\le \frac 1s |I|^{\alpha_1},$$
for every $s \ge 1$ and every $n$-cell $I$ with $n\ge \gamma^i_s$. Moreover, when $n<\gamma^i_1$, for any $n$-cell $I$, $$\mu^i(I)=2^{-n}< 2^{-n\alpha_1}=|I|^{\alpha_1}.$$
Similar to the above proof, we define a Borel probability measure $\mu$ supported on $A$ satisfying
$$\mu([x_1x_2\cdots x_s])=\mu^1(\,[x_1x_2\cdots x_{s_2}]\,)\mu^2(\,[x_{s_2+1}\cdots x_{s_3}]\,)\cdots \mu^l(\,[x_{s_l+1}\dots x_s]\,),$$
for $s_{l} < s\le s_{l+1}$. Then for any $n$-cell $I$ with $n>s_l$, we have 
$$\mu(I)\le 2^{1-l}|I|^{\alpha_1},$$
by using $\zeta_i>\gamma_2^i$ \eqref{restrictions} along with \eqref{191004e4.21}. Note that this inequality can be  generalized to arbitrary measurable sets. By Lemma \ref{massdisprin},
we conclude that $\mathcal H^{\alpha_1}(A)=\infty$. This finishes the construction of the set $A$. 
\end{proof}

\section{Constructing Sets in Higher dimensions}\label{sectionfour}

Suppose for every $s_0\in (0, 1]$, there is a set $E_{s_0}\subset [0, 1)$ with $\dim(E_{s_0})=s_0$ and $\mathcal{H}^{s_0}(E_{s_0})=0$, that is mid-point convex but not convex. For every integer $1\le d_0<d$, we set 
\begin{equation}
E_{s_0+d_0}=E_{s_0}\times [0, 1)^{d_0}.
\end{equation}
It is easy to see that $E_{s_0+d_0}$ has dimension $s_0+d_0$ with $\mathcal{H}^{s_0+d_0}$ measure zero, while being mid-point convex but not essentially convex. \\

Next we consider the problem of constructing a set $E$ with $\mathcal H^s(E)=\infty$ that is mid-point convex but not essentially convex. When $s=s_0+d_0$ is an integer, the construction is trivial. Let us assume that it is not an integer. Suppose that $s_0\in (0, 1)$ and $E_{s_0}\subset [0, 1]$ is the set with $\mathcal H^{s_0}(E_{s_0})=\infty$ that was constructed in the previous section and is mid-point convex but not convex.  In particular, we know that there exists a probability measure supported on $E_{s_0}$ such that 
\begin{equation}\label{mass_infty}
\nu(I)\le c(|I|)|I|^{s_0}, \text{ for all } I,
\end{equation}
with $\lim_{|I|\to 0} c(|I|)=0$.  We define 
\begin{equation}
E_{s_0+d_0}=E_{s_0}\times [0, 1)^{d_0}.
\end{equation}
First of all, the set $E_{s_0+d_0}$ has dimension $s_0+d_0$. Secondly, \eqref{mass_infty} allows us to construct a measure $\bar{\nu}$ supported on $E_{s_0+d_0}$ such that 
\begin{equation}\label{mass_infty2}
\bar{\nu}(B)\le c(|B|)|B|^{s_0}, \text{ for all dyadic rectangles } B,
\end{equation}
with $\lim_{|B|\to 0} c(|B|)=0$. This implies $\mathcal H^{s_0+d_0}(E_{s_0+d_0})=\infty,$ as desired.

		\bibliography{reference}{}
		\bibliographystyle{alpha}

\end{document}